 \DeclareMathAlphabet{\mathcalligra}{T1}{calligra}{m}{n}
\DeclareMathAlphabet{\mathpzc}{OT1}{pzc}{m}{it}
\numberwithin{equation}{section}
\theoremstyle{plain}
\newtheorem{thm}{Theorem} 
\newtheorem{cor}[thm]{Corollary}
\theoremstyle{definition}
\date{\today}
\title{On the nonvanishing of abstract
Cauchy-Riemann cohomology groups}
\author[J.~Brinkschulte]{Judith Brinkschulte}
\address{J.\ Brinkschulte:
Mathematisches Institut\\ Universit\"at Leipzig\\ 
Augustusplatz 10/11\\ 04109 Leipzig (Germany)}
\email{brinkschulte@math.uni-leipzig.de}
\author[C.D.~Hill]{C.Denson Hill}
\address{C.D.\ Hill:
Department of Mathematics\\ Stony Brook University
\\ Stony Brook, N.Y. 11794\\ USA}
\email{dhill@math.stonybrook.edu}
\author[M.~Nacinovich]{Mauro Nacinovich}
\address{M.\ Nacinovich:
Dipartimento di Matematica\\ II Universit\`a di Roma
``Tor Ver\-ga\-ta''\\ Via della Ricerca Scientifica\\ 00133 Roma
(Italy)}
\email{nacinovi@axp.mat.uniroma2.it}
\begin{document}

\newcommand{\pa}{\partial}
\newcommand{\opa}{\overline\pa}
\newcommand{\ol}{\overline }
\newcommand\Hdis{H_{\text{distr}}}
\numberwithin{equation}{section}
\newcommand\Ci{\mathcal{C}^{\infty}}
\newcommand\Dis{\mathcal{D}'}
\newcommand\Di{\mathcal{D}}
\newcommand\C{\mathbb{C}}  
\newcommand\R{\mathbb{R}}
\newcommand\Z{\mathbb{Z}}
\newcommand\N{\mathbb{N}}
\newcommand\PP{\mathbb{P}}
\newcommand\im{\mathrm{Im}\,}


\begin{abstract} In this paper we prove  infinite dimensionality of 
some local and global
 cohomology groups on abstract Cauchy-Riemann manifolds.
\end{abstract}

\maketitle

\section{Introduction}
It is natural to define abstractly smooth $CR$ structures on a smooth
manifold $M$. The motivation for this comes from the fact that, when $M$ is
embedded generically in an ambient complex manifold $X$, the complex
structure in $X$ induces a \textsl{tangential} $CR$ structure on $M$. If $X$ has complex
dimension $n+k$ and $M$ has real codimension $k$ in $X$, then $M$ has $CR$ dimension
$n$ and CR codimension $k$.
\par 
   For such abstract $CR$ manifolds, one can also define the \textsl{tangential}
$\bar{\partial}_M$ complex and the associated global abstract cohomology groups
$H^{p,q}(M)$, which are the analogues of the Dolbeault cohomology groups.
In spite of the rather large literature concerning $CR$ manifolds, these
abstract $CR$ cohomology groups remain somewhat mysterious. In this paper
we show that some of these global cohomology groups must be infinite
dimensional, or non Hausdorff, whenever one has a certain condition on
the Levi form of the $CR$ structure. What makes these results curious is
that the required condition on the Levi form needs to be satisfied
only at a single (micro-local) point on M; yet the conclusion is global. 
\par 
  The circle of ideas surrounding these results began with the famous paper \cite{L57},
 where Lewy found an example of a complex vector field $L$, in three real
variables, with real analytic coefficients, such that the
inhomogeneous equation $Lu = f$ had no local solutions $u$ for almost all
prescribed $f\in\mathcal{C}^{\infty}(\mathbb{R}^3)$. Lewy's argument was based on
the Schwarz reflection principle. A short time later H{\"o}rmander generalized
this example to allow any number of real variables, and
also higher order linear operators $L$, first for real analytic coefficients \cite{Ho60a},
and then for smooth coefficients \cite{Ho60}. 
H{\"o}rmander employed an
entirely different type of argument, which used functional analysis
to obtain some a priori estimates, and then showed how to construct
\textit{peak} functions $f$ that violated the assumed estimates. This approach
is reminiscent of the Sommerfeld radiation condition. In both works the
linear independence of the Lie bracket $[L,\bar{L}]$ from $L$ and $\bar{L}$ played
a crucial role. However these results were all for the scalar case of
one PDE for one unknown function.
\par 
   Somewhat later in \cite{AH1, AH2}   another generalization of Lewy's
example was found. The scalar equation was replaced by a system of
PDE's which correspond to $\bar{\partial}_M{ u} = f$, where $\bar{\partial}_M$ is the operator
in the tangential $CR$ complex associated to a real hypersurface $M$
embedded in $\mathbb{C}^N$. The condition on the Lie bracket was replaced by
an assumption on the Levi form of the surface $M$. There it was shown
how the signature of the Levi form is related to the places in the
$\bar{\partial}_M$-complex where one has a local nonsolvability result analogous
to that of Lewy and H{\"o}rmander. The arguments employed there were still
different, being of a geometrical, as well as of an analytical nature.
Also now $f$ has to satisfy compatibility conditions $\bar{\partial}_Mf = 0$,
which bring in additional complications. Subsequently in \cite{AFN, HN2} 
these local nonsolvability results were generalized to the situation
where $M$ has higher codimension. In \cite{AFN, HN2} 
the approach was to go back to
H{\"o}rmander's proof for the scalar case; they managed to construct
analogous \textit{peak} forms, using the embedding of $M$ into a complex manifold. 
H{\"o}rmander's technique was further pushed  to general 
overdetermined systems in \cite{N}.
\par
   All of the nonsolvability results mentioned above are local; that
is, the entire discussion is taking place in an arbitrarily small
neighborhood of some point $p_0$. We call this the \textit{failure of the
Poincare lemma for $\bar{\partial}_M$ at ~$p_0$.}
\par 
It is not obvious that this failure entails the infinite dimensionality of some
global cohomology groups on $M$. But this is in fact what we found: 
the signature at a point of a scalar Levi form, which is a
micro local condition, yields the infinite dimensionality of some
\textit{global} cohomology groups.  \par
For \textit{compact} $M$, some of our results have been obtained previously in \cite{BH}.

\section{Definitions}

 We consider a $\mathcal{C}^\infty$ smooth connected abstract $CR$ manifold of type $(n,k)$. 
Here an abstract $CR$ manifold of type $(n,k)$ is a triple $(M, HM, J)$, where $M$ is a paracompact smooth differentiable real manifold of dimension $2n+k$, $HM$ is a subbundle of rank $2n$ of the tangent bundle $TM$, and $J: HM \rightarrow HM$ is a smooth fiber preserving bundle isomorphism with $J^2= -\mathrm{Id}$. We also require that $J$ be formally integrable; i.e. that we have
$$\lbrack T^{0,1}M,T^{0,1}M\rbrack \subset T^{0,1}M$$
where 
$$ T^{0,1}M = \lbrace X+ iJX\mid X\in \Gamma(M,HM)\rbrace \subset \Gamma(M,\mathbb{C}TM),$$
with $\Gamma$ denoting smooth sections.

The $CR$ dimension of $M$ is $n\geq 1$ and the $CR$ codimension is $k\geq 1$.\\

We denote by $\opa_M$ the tangential Cauchy-Riemann operator on $M$ acting on smooth $(p,q)$-forms $f\in\mathcal{C}^\infty_{p,q}(M)$. The associated cohomology groups of $\opa_M$ acting on smooth forms will be denoted by $H^{p,q}(M)$, $0\leq p\leq n+k,\ 0\leq q\leq n$. For more details on the $\opa_M$ complex, we refer the reader to \cite{HN1} or 
\cite{HN2}.\\

In the present paper, $M$ is allowed to be compact, without boundary, but our main interest is the case where $M$ is a noncompact (open) manifold. When $M$ is compact, the infinite dimensionality of the appropriate cohomology 
groups, defined using smooth forms, 
was proved  in \cite{BH}.\\

Note that  the spaces $C^\infty_{p,q}(M)$ are Frechet-Schwartz spaces. \\

By definition  the Poincar\'e lemma for $\opa_M$  is said to be valid  at $p_0\in M$ at bidegree $(p,q)$ if and only if the sequence induced by $\opa_M$ on stalks
$$\mathcal{C}^\infty_{p,q-1}\lbrace p_0\rbrace \overset{\opa_M}\longrightarrow \mathcal{C}^\infty_{p,q}\lbrace p_0\rbrace \overset{\opa_M}\longrightarrow \mathcal{C}^\infty_{p,q+1}\lbrace p_0\rbrace  $$
is exact. Here $\mathcal{C}^\infty_{p,q}\lbrace p_0\rbrace $ denotes the stalk at $p_0$ in the sheaf of germs of $\mathcal{C}^\infty$ $(p,q)$-forms over $M$. Hence the Poincar\'e Lemma for $\opa_M$ fails to hold at $p_0$ at bidegree $(p,q)$ if for every sufficiently small open neighborhood $\Omega$ of $p_o$ there exists a smooth $(p,q)$-form $f$ on $\Omega$ with $\opa_M f=0$ in $\Omega$ which is not $\opa_M$-exact {\it on any} open neighborhood $\omega \subset\Omega$ of $p_o$. This is of course a local property of $M$ near $p_0$.\\

Throughout our paper, we also have to use smooth, compactly supported forms, which will be denoted by $\mathcal{D}^{p,q}(M)$. Similarly, $\mathcal{D}^{p,q}_K(M) = \lbrace f\in\mathcal{D}^{p,q}(M)\mid\mathrm{supp} f \subset \ K\rbrace$ for $K\subset M$ compact. Also, $\Vert\ \Vert_{K,m}$ will denote the usual $\mathcal{C}^k$-norm of forms on $K$ (with respect to a choice of a smooth Riemannian metric on $M$ and a smooth partition of unity).\\

In order to better describe $CR$ manifolds geometrically, it is convenient to introduce the
{\it characteristic conormal bundle} of $M$, which we
 denote by $H^o M=\lbrace \xi\in T^\ast M\mid \langle X,\xi\rangle =0, \forall X\in H_{\pi(\xi)}M\rbrace$. Here $\pi: T M \longrightarrow M$ is the natural projection. To each $\xi\in H^o_p M$, we associate the Levi form at $\xi:$
$$\mathcal{L}_p(\xi, X) = \xi(\lbrack J\tilde X, \tilde X\rbrack )= d\tilde\xi(X,JX) \ \mathrm{for} \ X\in H_p M$$
which is Hermitian for the complex structure of $H_p M$ defined by $J$. Here $\tilde \xi$ is a section of $H^o M$ extending $\xi$ and $\tilde X$ a section of $HM$ extending $X$. \\

Finally, a $CR$ manifold $M$ is called {\it pseudoconcave} if  at each point $x\in M$ and every characteristic conormal direction $\xi\in H^o_x(M)\setminus \lbrace 0\rbrace$, the Levi form $\mathcal{L}_x(\xi,\cdot)$ has at least one negative and one positive eigenvalue.\\

\section{Main results}

\begin{thm}\label{first}   
Let $M$ be an abstract 
$CR$ manifold of type $(n,k)$. Assume that there exists a point $p_0\in M$ and a characteristic conormal direction $\xi\in H^o_{p_0}M$ such that the Levi form $\mathcal{L}_{p_0}(\xi,\cdot)$ has $q$ negative and $n-q$ positive eigenvalues.  Then for $0\leq p\leq n+k$, the following holds: Either 
$H^{p,q}(M)$ is infinite dimensional or 
$H^{p,q+1}(M)$ is not Hausdorff and either $H^{p,n-q}(M)$ is infinite dimensional or 
$H^{p,n-q+1}(M)$ is not Hausdorff.
\end{thm}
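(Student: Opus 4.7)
The plan is to proceed by contradiction, and to derive the second dichotomy from the first via the involution $\xi\mapsto-\xi$ on $H^o_{p_0}M$: the Levi form $\mathcal{L}_{p_0}(-\xi,\cdot)=-\mathcal{L}_{p_0}(\xi,\cdot)$ has $n-q$ negative and $q$ positive eigenvalues, so applying the first dichotomy to $-\xi$ produces the statement for $(p,n-q)$ and $(p,n-q+1)$. For the first dichotomy, assume $H^{p,q}(M)$ is finite-dimensional and $H^{p,q+1}(M)$ is Hausdorff. Writing $Z^{p,q}=\ker\opa_M\subset\Ci_{p,q}(M)$ and $B^{p,q}=\opa_M\Ci_{p,q-1}(M)$, finite-dimensionality of $Z^{p,q}/B^{p,q}$ forces $B^{p,q}$ closed in $Z^{p,q}$ (a standard consequence of the open mapping theorem applied to $\opa_M\oplus s:\Ci_{p,q-1}(M)\oplus\C^N\to Z^{p,q}$, where $s$ parametrizes an algebraic complement), and Hausdorffness of $H^{p,q+1}$ forces $B^{p,q+1}$ closed. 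All the spaces are Fr\'echet--Schwartz, so the open mapping theorem applied to the induced continuous bijection $\Ci_{p,q-1}(M)/Z^{p,q-1}\to B^{p,q}$ yields a global a priori estimate: for every compact $K\subset M$ and every $m\geq 0$ there exist a compact $K'\supset K$, an integer $m'\geq 0$, and a constant $C>0$ such that every $f\in B^{p,q}$ admits $u\in\Ci_{p,q-1}(M)$ with $\opa_M u=f$ and $\|u\|_{K,m}\leq C\|f\|_{K',m'}$.

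The local analytic input is a H\"ormander-type peak sequence at $p_0$: the signature hypothesis on $\mathcal{L}_{p_0}(\xi,\cdot)$ is precisely the condition under which the constructions of \cite{AFN, HN2, N}, adapted to the abstract setting through a formal or microlocal local CR realization of $M$ at $p_0$ in place of the ambient holomorphic functions of the embedded case, produce, for every neighborhood $U$ of $p_0$ and prescribed compact neighborhood $K_0\subset U$ of $p_0$, smooth $\opa_M$-closed $(p,q)$-forms $f_j$ with $\mathrm{supp}\,f_j\subset U$ and $\|f_j\|_{U,m'}$ uniformly bounded, such that no sequence of local solutions $u_j$ of $\opa_M u_j=f_j$ near $p_0$ can remain bounded in $\|\cdot\|_{K_0,0}$. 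To combine the two, fix a continuous linear section $\phi:H^{p,q}(M)\to Z^{p,q}$ of the projection $\pi$ (which exists because $\pi$ has finite-dimensional image and closed kernel) and set $\tilde f_j:=f_j-\phi(\pi(f_j))\in B^{p,q}$. The correction lies in the fixed finite-dimensional subspace $F=\phi(H^{p,q}(M))$ with norm controlled by $\|f_j\|_{K',m'}$; if $\phi$ can be chosen so that $F$ consists of forms supported in $M\setminus\omega_0'$ for a small neighborhood $\omega_0'$ of $p_0$, then $\tilde f_j=f_j$ on $\omega_0'$ and the global estimate applied to $\tilde f_j$ delivers $u_j\in\Ci_{p,q-1}(M)$ with $\opa_M u_j=f_j$ on $\omega_0'$ and $\|u_j\|_{K,0}$ uniformly bounded, contradicting the peak-sequence property.

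The main obstacles I foresee are (i) the abstract peak-form construction itself, which must replace the ambient holomorphic functions of the classical AFN/HN argument by intrinsic substitutes --- approximate CR sections or microlocal symbols along $\xi$ --- without losing the quantitative seminorm behavior that is used in the global estimate; and (ii) the choice of the finite-dimensional section $\phi$: arranging its image to be supported away from $p_0$ amounts to showing that every global class in $H^{p,q}(M)$ admits a representative vanishing near $p_0$, a statement intertwined with the local Poincar\'e lemma and possibly requiring a reformulation via compactly-supported cohomology and Serre-type duality. All the remaining ingredients --- the closed-range consequences of the open mapping theorem, the functional-analytic bookkeeping, and the $\xi\mapsto-\xi$ symmetry --- are routine.
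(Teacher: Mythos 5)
Your overall architecture (contradiction, open mapping theorem to get a global a priori estimate, peak forms at $p_0$, and the $\xi\mapsto-\xi$ symmetry for the second dichotomy) matches the paper, but the two steps you flag as ``obstacles'' are exactly where your argument breaks, and the paper resolves both by a mechanism you have not supplied.

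First, your reduction to a finite-dimensional section $\phi$ whose image consists of forms supported away from $p_0$ is not available and is the wrong move. There is no reason a global class in $H^{p,q}(M)$ should admit a representative vanishing near $p_0$ --- indeed the content of the theorem is precisely that the germ of $M$ at $p_0$ obstructs the global cohomology, so such representatives should be expected \emph{not} to exist. The paper circumvents this entirely: if $\dim H^{p,q}(M)=\ell$, it places peak forms at $\ell+1$ distinct points $p_0,\dots,p_\ell$ in a neighborhood $V$ of $p_0$ where the Levi-form signature persists, with disjointly supported cut-offs; then for each $\lambda$ some nontrivial linear combination $\sum_j c_j^\lambda\tilde f_j^\lambda$ is exact by pure linear algebra. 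No section of the projection onto cohomology is needed. The contradiction is then quantitative, not qualitative: one pairs this combination with the conjugate combination of dual peak forms $g_j^\lambda$ of bidegree $(n+k-p,n-q)$ and compares a lower bound $c\sum_j|c_j^\lambda|^2\lambda^{-n-k/2}$ (coming from the diagonal terms $\int f_j^\lambda\wedge g_j^\lambda$ after the rescaling $y=\sqrt{\lambda}\,x$) with an upper bound $\lesssim\sum_j|c_j^\lambda|^2\lambda^{-n-k}$ coming from the a priori estimate and the rapid decay of $\|\opa_M g_j^\lambda\|$. Your proposal instead invokes, as a black box, a statement of the form ``no local solutions $u_j$ near $p_0$ stay bounded,'' which in the abstract setting is not an available input --- it is essentially equivalent to what is being proved.

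Second, your outline never actually uses the Hausdorffness of $H^{p,q+1}(M)$, which should alert you that something is missing: the theorem's conclusion is a genuine dichotomy, and the $(p,q+1)$ hypothesis must enter. It enters because on an \emph{abstract} $CR$ manifold the formal Cauchy--Kowalewski realization only makes $\opa_M\phi_j$, $\opa_M\psi_j$ and hence $\opa_M f_j^\lambda$ vanish to infinite order at $p_j$; the peak forms are only approximately closed (with $\|\opa_M f_j^\lambda\|$ rapidly decreasing in $\lambda$), not closed. The paper uses the a priori estimate at level $(p,q+1)$ --- i.e.\ the Hausdorffness of $H^{p,q+1}(M)$ --- to solve $\opa_M u_j^\lambda=\opa_M f_j^\lambda$ with $\|u_j^\lambda\|$ rapidly decreasing, and works with the corrected closed forms $\tilde f_j^\lambda=f_j^\lambda-u_j^\lambda$. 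Your assumption that the references directly furnish exactly $\opa_M$-closed peak forms on an abstract $M$ is unfounded; supplying that correction step, and the multi-point linear-algebra step above, is the substance of the proof.
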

We use the notation \begin{equation*}
H^{p,q}((p_0)) = \varinjlim_{U\ni{p}_0}
H^{p,q}(U)
\end{equation*}
for the local cohomology groups of $\, \opa_M$, on which we consider the
projective limit topology.

\begin{thm}\label{second}   
Under the same hypothesis 
of Theorem~\ref{first}
we have that, for $0\leq p\leq n+k$, the following holds: Either the local cohomology group $H^{p,q}((p_0))$ 
is infinite dimensional or 
$H^{p,q+1}((p_0))$ is not Hausdorff and either $H^{p,n-q}((p_0))$ is infinite dimensional or 
$H^{p,n-q+1}((p_0))$ is not Hausdorff.\\
In particular the Poincar\'e lemma for $\opa_M$ fails to hold at the point $p_o$ at either bidegree $(p,q)$ or at bidegree $(p,q+1)$ and the Poincar\'e lemma for $\opa_M$  fails to hold at the point $p_0$ at either bidegree $(p,n-q)$ or at bidegree $(p,n-q+1)$.
\end{thm}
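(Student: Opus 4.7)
The plan is to mirror the functional-analytic strategy that underlies Theorem~\ref{first}, now localized around $p_0$, and combine it with a family of peak forms concentrated in phase space near $(p_0,\xi)$. I would argue by contradiction: suppose both $H^{p,q}((p_0))$ is finite dimensional and $H^{p,q+1}((p_0))$ is Hausdorff. Fix a fundamental system of open neighborhoods $U_\nu\searrow\{p_0\}$; each $\mathcal{C}^\infty_{p,q}(U_\nu)$ is a Fr\'echet--Schwartz space, and with the projective limit topology on $H^{p,q+1}((p_0))$ Hausdorffness translates into the range of $\opa_M$ being almost closed, up to a finite-dimensional defect, on sufficiently small neighborhoods. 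Applying the Banach open mapping theorem then yields nested open sets $\omega\Subset\Omega$ of $p_0$, an integer $m$, a constant $C>0$, and a finite-dimensional subspace $F\subset\ker\opa_M$ on $\omega$, such that every $\opa_M$-closed $(p,q+1)$-form $f$ on $\Omega$ admits a decomposition $f|_\omega = \opa_M u + h$ with $h\in F$ and $\Vert u\Vert_{\omega,0}\leq C\,\Vert f\Vert_{\Omega,m}$.

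The analytic core is to contradict this estimate via peak forms adapted to the signature of $\mathcal{L}_{p_0}(\xi,\cdot)$. Using only the formal integrability of $J$, I would construct an approximate $CR$ phase function $\Phi$ near $p_0$ of the form $\Phi(x) = \langle\xi,x\rangle + \tfrac{i}{2}Q(x) + O(|x|^3)$, where $Q$ is a real quadratic form whose Hermitian part on $H_{p_0}M$ is adapted to the signature $(q,n-q)$ of the Levi form and for which $\opa_M\Phi$ vanishes to arbitrary prescribed order at $p_0$; this is done as in \cite{AFN,HN2,N}. With a smooth cut-off $\chi$ supported in $\omega$ and a $(p,q+1)$-valued amplitude $a$, the family $f_\tau = \chi\, a\, e^{i\tau\Phi}$ consists of $\opa_M$-closed $(p,q+1)$-forms modulo errors of order $O(\tau^{-N})$, with $\Vert f_\tau\Vert_{\Omega,m}$ controlled by a fixed polynomial in $\tau$. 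Pairing any putative primitive $u_\tau$ with a compactly supported dual form and invoking stationary phase at the non-degenerate critical point of the phase then forces $\Vert u_\tau\Vert_{\omega,0}$ to grow faster than any polynomial in $\tau$, contradicting the a priori estimate for $\tau$ sufficiently large.

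The second dichotomy, at bidegrees $(p,n-q)$ and $(p,n-q+1)$, follows immediately by applying the same construction at the opposite conormal direction $-\xi\in H^o_{p_0}M$, for which $\mathcal{L}_{p_0}(-\xi,\cdot) = -\mathcal{L}_{p_0}(\xi,\cdot)$ has exactly $n-q$ negative and $q$ positive eigenvalues. The statement on the Poincar\'e lemma is then a direct unwinding of definitions: infinite dimensionality of $H^{p,q}((p_0))$ produces a $\opa_M$-closed germ at $p_0$ in bidegree $(p,q)$ that is not the $\opa_M$-image of any smooth $(p,q-1)$-form on any neighborhood, while non-Hausdorffness of $H^{p,q+1}((p_0))$ yields a $\opa_M$-closed germ in bidegree $(p,q+1)$ that is a limit of $\opa_M$-exact forms but not itself exact. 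I expect the principal obstacle to be the construction of approximate $CR$ phase functions of sufficient accuracy in the \emph{abstract}, non-embedded, setting, where one cannot invoke an ambient holomorphic extension and must exploit formal integrability alone, following the strategy of \cite{AFN,HN2,N}; the bookkeeping in the functional-analytic step, keeping track of the finite-dimensional defect $F$ across the shrinking neighborhoods $\omega\Subset\Omega$, is the secondary technical point.
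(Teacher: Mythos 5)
Your overall strategy --- a priori estimates from the open mapping theorem, peak forms adapted to the signature of the Levi form, and a duality pairing whose asymptotics contradict the estimates --- is the right one, and is how the paper argues. But the functional-analytic skeleton is garbled in a way that breaks the proof. You derive a single estimate at bidegree $(p,q+1)$ asserting that \emph{every} $\opa_M$-closed $(p,q+1)$-form on $\Omega$ decomposes as $\opa_M u + h$ with $h$ in a finite-dimensional space $F$ and $u$ controlled. That does not follow from the hypotheses you are negating: Hausdorffness of $H^{p,q+1}((p_0))$ only gives closedness of the space $\mathcal{Z}^{p,q+1}_0(\omega)$ of closed forms whose germ class at $p_0$ vanishes, hence (after a Baire category argument over a fundamental system of neighborhoods of $p_0$, which you gesture at but do not carry out) a solvability estimate for forms that are \emph{already locally exact}; a finite-dimensional defect valid for all closed $(p,q+1)$-forms would require $H^{p,q+1}((p_0))$ itself to be finite dimensional, which is not assumed. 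Moreover the finite dimensionality of $H^{p,q}((p_0))$ --- the other half of the negated dichotomy --- is never actually used in your argument, and without it you cannot conclude: a peak form violating solvability-with-estimates only shows some class is nonzero, not that the cohomology is infinite dimensional.

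The paper separates the two assumptions as follows. The peak forms live in degree $(p,q)$, not $(p,q+1)$: one builds $f_j^\lambda = \chi_j e^{\lambda\phi_j}\,\omega_1\wedge\cdots\wedge\omega_p\wedge\ol\omega_1\wedge\cdots\wedge\ol\omega_q$ at $\ell+1$ \emph{distinct} points (where $\ell=\dim H^{p,q}((p_0))$) with disjointly supported cut-offs. Since $\opa_M f_j^\lambda$ is exact and rapidly decreasing in $\lambda$, the $(p,q+1)$-level estimate (this is where Hausdorffness enters) produces rapidly decreasing corrections $u_j^\lambda$ making $\tilde f_j^\lambda = f_j^\lambda - u_j^\lambda$ exactly closed. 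Finite dimensionality then yields a nontrivial combination $\sum_j c_j^\lambda\tilde f_j^\lambda\in\mathcal{Z}^{p,q}_0$, to which the degree-$(p,q)$ duality estimate applies; pairing against $\sum_j\ol c_j^\lambda g_j^\lambda$, with $g_j^\lambda$ the dual $(n+k-p,n-q)$ peak forms, gives a lower bound $c\,\lambda^{-n-k/2}$ (a Laplace-type asymptotic, since $\phi_j+\psi_j=-2|x|^2+O(|x|^3)$) against an upper bound that is rapidly decreasing. Your single-point, single-estimate version has no mechanism to absorb the finite-dimensional obstruction: the term $\int h_\tau\wedge g_\tau$ coming from your defect space $F$ is uncontrolled. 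Finally, the construction of the approximate phases in the abstract setting, which you flag as the main obstacle, is the routine part (formal Cauchy--Kowalewski approximate embedding as in \cite{AH1,AFN}); the missing content is the multi-point linear-algebra device and the correct placement of the two hypotheses.
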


\begin{cor}\label{cor}   
Let $M$ be an abstract $CR$ manifold of type $(n,k)$ which is pseudoconcave and not compact. Assume that there exists a point $p_0\in M$ and a characteristic conormal direction $\xi\in H^o_{p_0}M$ such that the Levi form $\mathcal{L}_{p_0}(\xi,\cdot)$ has $n-1$ negative and $1$ positive eigenvalues. Then, for $0\leq p\leq n+k$, we have that $H^{p,n-1}(M)$ is infinite dimensional.
\end{cor}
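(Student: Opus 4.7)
The strategy is to apply Theorem~\ref{first} with the choice $q = n-1$, which matches the hypothesis of the corollary since the Levi form $\mathcal{L}_{p_0}(\xi,\cdot)$ has $n-1 = q$ negative and $1 = n-q$ positive eigenvalues. The first alternative produced by the theorem reads: either $H^{p,n-1}(M)$ is infinite dimensional, or $H^{p,n}(M)$ is not Hausdorff. To prove the corollary it therefore suffices to rule out the second possibility.

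For this I would argue that in fact $H^{p,n}(M) = 0$ for our pseudoconcave noncompact $M$, using a Serre-type duality. The spaces $\mathcal{C}^\infty_{p,q}(M)$ are Fr\'echet-Schwartz, and the strong dual of $\mathcal{C}^\infty_{p,n}(M)$ can be identified with the space of compactly supported $(n+k-p,0)$-currents on $M$; under this identification the transpose of $\opa_M\colon\mathcal{C}^\infty_{p,n-1}(M)\to\mathcal{C}^\infty_{p,n}(M)$ is, up to sign, $\opa_M$ acting on compactly supported $(n+k-p,0)$-currents. Pseudoconcavity implies weak unique continuation for $\opa_M$-closed objects of bidegree $(*,0)$: such an object vanishing on a nonempty open subset of the connected manifold $M$ must vanish identically. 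Since $M$ is noncompact, every compactly supported $\opa_M$-closed $(n+k-p,0)$-current vanishes on the nonempty open complement of its support, and hence identically. Thus the kernel of the transpose is trivial, and by the Banach closed-range theorem the image of $\opa_M$ at level $n$ is dense in $\mathcal{C}^\infty_{p,n}(M)$; upgrading this density to closedness then yields $H^{p,n}(M) = 0$ and in particular its Hausdorffness, excluding the second alternative of the dichotomy.

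The principal technical obstacle is precisely the closed-range step: triviality of the kernel of the transpose delivers only density, and promoting it to closedness of the image requires an additional input, typically a global a priori estimate or an Andreotti-Grauert-type $1$-concave vanishing theorem for the $\opa_M$-complex. In the Hill-Nacinovich framework \cite{HN1,HN2} such a vanishing is available for pseudoconcave CR manifolds and directly yields $H^{p,n}(M) = 0$ on noncompact $M$; the weak unique continuation step, while essential, is by comparison standard for pseudoconcave CR structures.
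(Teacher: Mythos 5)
Your reduction is exactly the paper's: apply Theorem~\ref{first} with $q=n-1$ and exclude the second alternative by showing $H^{p,n}(M)=0$ for noncompact pseudoconcave $M$. The vanishing you sketch via duality and unique continuation (whose closed-range step you rightly flag as the crux) is precisely Malgrange's vanishing theorem for pseudoconcave $CR$ manifolds, which the paper simply quotes from \cite{BH1} rather than reproving.
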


\begin{cor}
\label{third}   
Let $M$ be an abstract $CR$ manifold of type $(n,k)$ which is pseudoconcave. Assume that there exists a point $p_0\in M$ and a characteristic conormal direction $\xi\in H^o_{p_0}M$ such that the Levi form $\mathcal{L}_{p_0}(\xi,\cdot)$ has $n-1$ negative and $1$ positive eigenvalues. Then, for $0\leq p\leq n+k$, the Poincar\'e lemma for $\opa_M$ fails to hold at the point $p_0$ at  bidegree $(p,n-1)$ .
\end{cor}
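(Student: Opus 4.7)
The plan is to apply Theorem~\ref{second} with the parameter choice $q = n-1$ and then rule out the ``top-bidegree'' branch of the resulting dichotomy by using pseudoconcavity. With this choice of $q$, the hypothesis of Theorem~\ref{second} (namely that the Levi form at $(p_0,\xi)$ has $q$ negative and $n-q$ positive eigenvalues) is exactly the hypothesis on $\mathcal{L}_{p_0}(\xi,\cdot)$ assumed in Corollary~\ref{third}, since $n-q=1$. Theorem~\ref{second} then yields: either $H^{p,n-1}((p_0))$ is infinite dimensional, or $H^{p,n}((p_0))$ is not Hausdorff. Equivalently, by the ``in particular'' statement, the Poincar\'e lemma for $\opa_M$ fails at $p_0$ in bidegree $(p,n-1)$ or in bidegree $(p,n)$. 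So it suffices to show that the Poincar\'e lemma at $p_0$ does hold in the top bidegree $(p,n)$.

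To establish this local top-bidegree vanishing, I would first note that pseudoconcavity is a pointwise property of the Levi form, so every open subset $U$ of $M$ is itself a pseudoconcave $CR$ manifold. It is then a known consequence of pseudoconcavity that $H^{p,n}(U)=0$ on every sufficiently small connected open neighborhood $U$ of $p_0$: since $\opa_M$ acts from $\mathcal{C}^\infty_{p,n-1}(U)$ into the top-bidegree space $\mathcal{C}^\infty_{p,n}(U)$ (with no further $\opa_M$ after it), the vanishing amounts to local solvability of $\opa_M u = f$ for an arbitrary smooth $(p,n)$-form $f$ on $U$, and this is proven via the mixed signature of the Levi form in every characteristic direction (exactly the techniques developed in the Hill--Nacinovich references \cite{HN1, HN2}). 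Passing to the inductive limit as $U$ shrinks to $p_0$ yields $H^{p,n}((p_0))=0$, which is in particular Hausdorff. Therefore the second alternative in the dichotomy is ruled out, $H^{p,n-1}((p_0))$ must be infinite dimensional, and the Poincar\'e lemma for $\opa_M$ in bidegree $(p,n-1)$ fails at $p_0$.

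The main obstacle is the local top-degree vanishing $H^{p,n}(U)=0$. The essential input is that pseudoconcavity gives both positive and negative eigenvalues of $\mathcal{L}$ along every characteristic direction at every point of $U$, which is what allows one to solve the inhomogeneous tangential Cauchy--Riemann equation in top bidegree. One can proceed either by an $L^2$ method with weighted a priori estimates exploiting this mixed signature, or by dualizing to a unique-continuation statement for compactly supported CR forms on $U$, which holds because pseudoconcavity forces continuation of CR solutions across any relatively compact open piece.
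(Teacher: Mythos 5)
Your proposal is correct and follows essentially the same route as the paper: apply Theorem~\ref{second} with $q=n-1$ and eliminate the top-bidegree alternative using the validity of the Poincar\'e lemma in bidegree $(p,n)$ on pseudoconcave $CR$ manifolds. The only difference is that the paper simply cites \cite{B} for that top-degree local exactness rather than sketching its proof, so the ``main obstacle'' you identify is in fact an already established result and needs no further argument.
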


\section{Geometric set-up} \label{geosetup}

Our proof of Theorem~\ref{first} relies on a well known construction for $CR$ embedded $CR$ manifolds 
at a point where there exists a characteristic conormal direction such that the associated Levi form 
has exactly $q$ negative and $n-q$ positive eigenvalues. 
For the reader's convenience, we will now sketch this construction. 
For more details, we refer the reader to \cite[p. 389\ ff.]{AFN}.

So let $S\ni 0$ be a piece of a smooth $CR$ submanifold of $\C^{n+k}$, of $CR$ dimension $n$ 
and $CR$ codimension $k$,
 such that $\mathcal{L}_0(\xi,\cdot)$ has $q$ negative and $n-q$ positive eigenvalues 
 for some characteristic conormal direction $\xi$. By a suitable choice of holomorphic coordinates
$$z_1,\ldots, z_n, z_{n+1} = t_1 + is_1,\ldots, z_{n+k} = t_k + is_k,$$
on $\C^{n+k}$, we can assume that
$$ S = \lbrace \rho_1(z) = \ldots = \rho_k(z) = 0\rbrace,$$ 
for defining functions of the form 
$$\rho_j = s_j - h_j(z_1,\ldots, z_n, t_1,\ldots, t_k),$$
with $h_j = O(\vert z\vert^2)$ at $0$. 
Then 
$T^{1,0}_0 S = \C^n = \lbrace z_{n+1} = \ldots, z_{n+k} = 0\rbrace,$ 
and 
the assumption 
on the Levi-form of $S$ at $0$ 
means 
that there are $k$ real numbers $\lambda_1,\ldots, \lambda_k$, 
for which 
the hermitian form 
\begin{equation*}
 {\sum}_{\mu,\nu=1}^n\frac{\pa^2({\sum}_{j=1}^k\lambda_jh_j)}{\partial{z}_\mu\partial\bar{z}_\nu}(0)z_\mu\bar{z}_\nu
\end{equation*}
is nondegenerate on $\mathbb{C}^n$, with $q$ negative and $n-q$ positive eigenvalues.
\par 
Set $h={\sum}_{j=1}^k\lambda_j\rho_j$.
We set $h = \sum_{\alpha = 1}^k \lambda_\alpha h_\alpha$. 
In $\C^n=T^{1,0}_0S$, 
we may assume 
$h$ 
to be in diagonal form, i.e. 
\begin{equation*}
 \left( \frac{\pa^2{h}}{\pa{z}_\mu\pa\bar{z}_\nu}(0)\right)_{1\leq\mu,\nu\leq{n}}=
 \begin{pmatrix}
-\mathrm{I}_q& 0\\
0 &\mathrm{I}_{n-q} 
\end{pmatrix}.
\end{equation*}

Set 
$$\phi = i \left(\sum_{\alpha = 1}^k \lambda_\alpha t_\alpha\right) - h(z,t) +  2\sum_{\mu,\nu 
= 1}^n\frac{\pa^2 h}{\pa z_\mu\pa z_\nu}(0) z_\mu z_\nu  - m\sum_{\alpha = 1}^q \vert z_\alpha\vert^2 - 
m\sum_{\alpha = 1}^k(t_\alpha + ih_\alpha)^2$$
for some suffiently large $m > 0$. Then
$$\mathrm{Re}\,\phi (z) \leq -\frac{1}{2} (\sum_{\alpha = 1}^n 
\vert z_\alpha\vert^2 + \sum_{\alpha = 1}^k t_\alpha^2) \qquad \mathrm{near}\ 0.$$
In fact, after approximating $\mathrm{Re}\,\phi$  
by its second order Taylor
polynomial $\mathrm{Re}\,\phi_2$, 
the remainder is 
$\mathrm{O}(|z|^3+|t|^3)$, and hence
bounded by a small constant times $|z|^2+|t|^2$ on a neighborhood of $0$.
This reduces the proof to prove the estimate with $\mathrm{Re}\,\phi_2$
on the left hand side and $\tfrac{1}{2}$ substituted by any 
$\mathrm{constant}>\tfrac{1}{2}$
on the right hand side. This can be obtained
by using the elementary inequality $2ab\leq ca^2 + c^{-1}b^2$
and  taking a large $m>0$ 
to 
take care of the terms involving  the
second order derivatives $\pa^2h(0)/\partial{t}_\beta\partial{z}_\alpha$
and $\pa^2h(0)/\partial{t}_\beta\partial{\bar{z}}_\alpha$. 

For $\lambda > 0$ we then define the  
\textsl{peak forms}
$$f_\lambda = e^{\lambda\phi} dz_1\wedge\ldots\wedge dz_p\wedge d\ol z_1\wedge \ldots\wedge d\ol z_q. 
$$
They are 
smooth 
$(p,q)$-forms
on $S$ satisfying $\opa_S f_\lambda = 0$ (note that $t_\alpha + ih_\alpha$ 
is the restriction to $S$ of the holomorphic function $z_{n+\alpha}$, $\alpha = 1,\ldots, k$).

Similarly we set
$$\psi = -i \left(\sum_{\alpha = 1}^k \lambda_\alpha t_\alpha\right) + h(z,t) - 2\sum_{\mu,\nu = 1}^n\frac{\pa^2 h}{\pa z_\mu\pa z_\nu}(0) z_\mu z_\nu - m \sum_{\alpha = q+1}^n \vert z_\alpha\vert^2 - m\sum_{\alpha = 1}^k(t_\alpha + ih_\alpha)^2$$

for some suffiently large $m > 0$. Then
$$\mathrm{Re}\,
\psi (z) \leq -\frac{1}{2} (\sum_{\alpha = 1}^n \vert z_\alpha\vert^2 
+ \sum_{\alpha = 1}^k t_\alpha^2) \qquad \mathrm{near}\ 0,$$
and we define 
another one-parameter family of \textsl{peak forms}, which are 
of degree $(n+k-p, n-q)$, on~$S$:
$$g_\lambda = e^{\lambda\psi} dz_{p+1}\wedge\ldots\wedge dz_{n+k}\wedge d\ol z_{q+1}\wedge\ldots\wedge d\ol z_n.$$

Again we have $\opa_S g_\lambda =0$.\\

In the proof of Theorem \ref{first}, \ref{second}, \ref{ffirst}, \ref{ssecond} 
the forms $f_\lambda$ and $g_\lambda$ play an essential role, 
because their properties 
will be used to 
contradict  
certain a priori estimates related to the validity of the Poincar\'e lemma.
The proofs of our theorems 
rely 
on 
constructing suitable forms
that agree to infinite order, 
at some points, with the pullbacks of $f_\lambda,g_\lambda$.
\par

\section{Proofs}
\begin{proof}[Proof of Theorem~\ref{first}]
Let us set
\begin{align*}
\mathcal{Z}^{p,q}(M)&=\{f\in\mathcal{C}^\infty_{p,q}(M)\mid
\opa_M{f}=0\},\\
\mathcal{Z}^{p,q}_0(M)&=\{f\in\mathcal{Z}^{p,q}(M)\mid
[f]=0\},
\end{align*}
where $[f]$ is the cohomology class of 
$f\in\mathcal{Z}^{p,q}(M)$
in $H^{p,q}(M)$. \par 
The map $f\to[f]$ is continuous. Thus,
if we assume that $H^{p,q}(M)$ is Hausdorff, then the subspace
$\mathcal{Z}_0^{p,q}(M)$  is closed and hence Fr{\'e}chet.
  As a consequence of the open mapping theorem 
  we also get an a priori estimate: 
   For every compact $K_{p,q-1}\Subset{M}$ and integer $m_{p,q-1}\geq{0}$ 
   there is a compact 
   $K_{p,q}\Subset{M}$, an integer $m_{p,q} \geq 0$ and a constant 
   $C_{p,q}>0$ such that 
\begin{equation}\label{51}\begin{cases}
\forall f\in\mathcal{Z}_0^{p,q}(M),\;\\
\exists\, {u}\in\mathcal{C}^\infty_{p,q-1}(M),\end{cases}
\;\;\text{such that}\; \;\begin{cases} \opa_Mu=f,\\[5pt]
\|u\|_{K_{p,q-1},m_{p,q-1}}\leq{C}_{p.q}\|f\|_{K_{p.q},m_{p,q}}.
\end{cases}
\end{equation}

Using Stokes' formula and \eqref{51}, we obtain 
\begin{equation}\label{apriori3}\left\{
\begin{aligned}
\left|\int_K
f\wedge{g}\,\right| \leq C_{p,q}\cdot
\|f\|_{K_{p,q},m_{p,q}}\cdot\|\opa_Mg\|_{K,0},
\quad\qquad\qquad  \\
\forall f\in\mathcal{Z}_0^{p,q}(M),\;
\forall 
g\in\mathcal{D}_K^{n+k-p,n-q}(M),
\end{aligned}\right.
\end{equation}
where $K$ is a compact subset contained in an oriented open submanifold  of $M$ and
$K_{p,q-1}\supset{K}$. \par 
To prove Theorem~\ref{first}, we argue by contradiction, assuming that
the dimension $\ell$ of 
$H^{p,q}(M)$ is finite and that 
$H^{p,q+1}(M)$ is Hausdorff. In particular, also
$H^{p,q}(M)$ is Hausdorff and 
\eqref{51} holds for both
$(p,q)$ and  $(p,q+1)$. \par 

Let  
$V$ be an oriented 
open neighborhood of $p_0\in M$ such that for every point $x\in V$, 
there exists a characteristic conormal direction $\xi_x$ such that 
$\mathcal{L}_x(\xi_x,\cdot)$ has $q$ negative and $n-q$ positive eigenvalues.\par
 
Fix 
$\ell$ 
distinct
points $p_1,\ldots, p_\ell$ 
in 
$V,$ 
all different from $p_0$ . 
Later on we shall choose 
cut-off functions $\chi_j,\ j=0,1,\ldots, \ell$, having 
disjoint 
compact supports 
in 
sufficiently small 
neighborhoods 
of each $p_j$,  
and 
such that $\chi_j\equiv 1$ near $p_j$.
The compact $K\Subset{V}$ will be the union of $\mathrm{supp}\,{\chi_j}$. 
For each $0\leq j\leq \ell$, we 
make the following construction:\par
\smallskip

Having fixed smooth coordinates centered at $p_j$,  by 
the formal Cauchy-Kowalewski procedure of \cite{AH1,AFN}, 
we 
find smooth complex valued functions 
$\varphi = (\varphi_1,\ldots, \varphi_{n+k})$ 
in an open neighborhood $V_j$ of $0$ with each $\varphi_i(0)=0,\; 
d\varphi_1\wedge\ldots\wedge\varphi_{n+k}\not= 0$ in $V_j$, 
and 
$\opa_M \varphi_i$ 
vanishing
to infinite order at $0$. 
Then $\varphi: V_j\longrightarrow \mathbb{C}^{n+k}$ 
gives a smooth local embedding 
$\tilde{M}_j = \varphi(V_j)$ of $M$ 
into $\mathbb{C}^{n+k}$. 
The $CR$ structure on 
$\tilde{M}_j$ 
induced from $\mathbb{C}^{n+k}$
agrees to infinite order at $0$ with the original 
one 
on $M$ at $p_j$.
In particular 
$\tilde{M}_j$
is a smooth real submanifold 
in $\mathbb{C}^{n+k}$ sitting inside a strictly $(n-q+k-1)$-pseudoconvex 
and strictly $q$-pseudoconcave real hypersurface (this means that this
hypersurface has a real valued smooth
defining function whose complex Hessian has 
signature
$(n-q+k-1,\,q)$ when restricted to
its analytic tangent). 
Thus,
after possibly shrinking $V_{\! j},$
we can find 
smooth complex valued functions $\phi_j$ and $\psi_j$ 
on $V_{\!{j}},$ with $\opa_M\phi_j$ and $\opa_M\psi_j$ vanishing to infinite order 
at $0$ 
and, by fixing $m>2$ in \S{4},  
satisfying
\begin{align}  \label{phi}
\mathrm{Re}\,\phi_j &\leq -\frac{1}{2}\vert x\vert^2 \quad\mathrm{on} \ V_{\!{j}},
\\ \label{psi}
\mathrm{Re}\,\psi_j &\leq -\frac{1}{2}\vert x\vert^2 \quad\mathrm{on} \ V_{\!{j}},
\\
  \label{phi+psi}
\phi_j +\psi_j & = -2\vert x\vert^2 + O(\vert x\vert^3)\quad\mathrm{on} \ V_{\!{j}}
\end{align}
for 
the 
coordinate chart $x$ centered at $p_j$ 
(they are the pullbacks by $\varphi$ of the $\phi,\,\psi$ on $\tilde{M}_j$ of \S\ref{geosetup}).
\par

Moreover, 
by  
a suitable choice of holomorphic coordinates in
$\C^{n+k}$, 
we obtain that 
$T^\ast M$ is spanned near $p_j$ 
by forms
\begin{align*}
 \omega_1 = dz_1 + O(\vert x\vert^\infty),\; \hdots,\;
 \omega_n = dz_n +
 O(\vert x\vert^\infty),\;\hdots,\;
 \ol\omega_1= d\ol z_1 + O(\vert x\vert^\infty),\qquad \\
 \hdots,\; \ol\omega_n  = d\ol z_n = O(\vert x\vert^\infty),\;
 \theta_1 = dx_{2n+1} + O(\vert x\vert^\infty),\;\hdots, \;
\theta_k = dx_{2n+k} +  O(\vert x\vert^\infty),
\end{align*}
 which are $d$-closed to infinite order at $0$, 
 and 
 $
 {T}^{1,0}M$ is spanned by $\omega_1,\hdots,\omega_n$ and 
 $
 T^{0,1}M$ 
 by $\ol\omega_1,\ldots,\ol\omega_n$ 
 on a neighborhood of $p_j$. 
 Following again \cite{AFN} or \cite{HN2}, 
 by the geometric condition on the Levi-form 
 at $p_j$ 
 we may also assume 
 that $\opa_M(\phi_j \wedge\ol\omega_1\wedge\ldots\wedge\ol\omega_q)$ 
 and $\opa_M(\psi_j\wedge\ol\omega_{q+1}\wedge\ldots\wedge\ol\omega_n)$
  vanish to infinite order at $p_j$. 
\par

For each real $\lambda > 0$ we now define 
the smooth $(p,q)$-form
$$ f_j^\lambda = \chi_{\!{j}} e^{\lambda\phi_j} \omega_1\wedge\ldots\wedge\omega_p
\wedge\ol\omega_1\ldots\wedge\ol\omega_q,$$
where the cut-off function $\chi_{\!{j}}$ has compact support contained in $V_j$. 
Moreover 
our choice  
of $\phi_j$ 
implies 
that $\opa_M(f_j^\lambda)$ is rapidly decreasing 
with respect to 
$\lambda$ 
in the topology of $\mathcal{D}_K^{p,q+1}(M)$, 
as $\lambda$ tends to infinity. 
Indeed, by (\ref{phi}) the function $\opa_M[\exp(\lambda\phi_j)],$ and any derivative of it with respect to $x$, 
is rapidly decreasing as $\lambda\rightarrow +\infty$ in any fixed small neighborhood of $p_j$.
Indeed, in a suitable trivialization, the components of any derivative of $\opa_M[\exp(\lambda\phi_j)]$ are bounded
on a neighborhood $U_0$ 
of $x=0$ by $h(x)\exp(-\lambda|x|^2/2)$, for a positive function $h$ which vanishes to
infinite order at $0$. For any compact subset $\kappa\Subset{U}_0$ and any integer $m>{0}$,
we obtain 
\begin{equation*}
 h(x)\exp(-\lambda|x|^2/2)\leq c_m|x|^m\exp(-\lambda|x|^2/2)\leq c_m(m/\lambda)^{m/2}\exp(-m/2),\;\;
 \forall x\in\kappa,
\end{equation*}
with a constant $c_m>0$ independent of $\lambda$.
The terms containing a derivative of $\chi_j$ are rapidly decreasing in virtue of \eqref{phi},
because they have support
in an annulus $\{0<r'\leq |x|\leq r''\}$.
\par 

We also set
$$ g_j^\lambda = \chi_j e^{\lambda\psi_j} \omega_{p+1} \wedge\ldots\wedge\omega_n\wedge\theta_1\wedge\ldots
\wedge\theta_k\wedge\ol\omega_{q+1}\wedge\ldots\wedge\ol\omega_n.$$
Then, arguing as before, 
we get that also 
$\opa_M(g_j^\lambda)$ is rapidly decreasing with respect to 
$\lambda$ in the topology of $\mathcal{D}_K^{n+k-p,n-q}(M)$, as $\lambda$ tends to infinity.\par

Next, 
using \eqref{51},
we solve $\opa_M u_j^\lambda = \opa_M f_j^\lambda$ with an estimate
\begin{equation}  \label{apriori4}
\Vert u_j^\lambda\Vert_{K_{p,q},m_{p,q}} \leq 
C_{p,q+1} \Vert \opa_M f_j^\lambda\Vert_{K_{p,q+1},m_{p,q+1}} . 
\end{equation} 
Hence 
$\Vert u_j^\lambda\Vert_{K_{p,q},m_{p,q}}$ is rapidly decreasing with respect to 
$\lambda$.
 The forms 
 $\tilde f_j^\lambda = f_j^\lambda - u_j^\lambda$ 
 are $\opa_M$-closed 
 on $M$. \par
 
 Since $\mathrm{dim}_\C H^{p,q}(M)=\ell$, there 
 are constants $c_0^\lambda,\ldots, c_\ell^\lambda$, not all equal to zero, such that
 $$c_0^\lambda\tilde f_0^\lambda + \ldots + c_\ell^\lambda \tilde f_\ell^\lambda \in\mathcal{Z}_0^{p,q}(M). $$
 To get a contradiction, we are going to use the estimate (\ref{apriori3}) with 
 $f=\sum_{j=0}^\ell c_j^\lambda \tilde f_j^\lambda$ and 
 $g= \sum_{j=0}^\ell \ol c_j^\lambda g_j^\lambda$. We have

\begin{align} \label{fwedgeg}  
 \int_K{f}\wedge g  & = 
\int_K
 \big(\sum_{j=0}^\ell c_j^\lambda \tilde f_j^\lambda\big)
 \wedge \big(\sum_{j=0}^\ell \ol c_j^\lambda g_j^\lambda\big) 
 \\ 
 & =  
 \int_K\big(\sum_{j=0}^\ell c_j^\lambda ( f_j^\lambda-u_j^\lambda)\big)
 \wedge \big(\sum_{j=0}^\ell \ol c_j^\lambda g_j^\lambda\big)\nonumber\\
& =  \sum_{j=0}^\ell\vert c_j^\lambda\vert^2  
\int_K{f}_j^\lambda\wedge g_j^\lambda - 
\int_K\sum_{i,j=0}^\ell c_i^\lambda \ol c_j^\lambda u_i^\lambda\wedge g_j^\lambda . \notag 
\end{align}
Note that, while writing the first sum in the last equality,
we 
used that the $\chi_j$'s have disjoint supports.\par

We are now going to estimate the term on the right of (\ref{fwedgeg}). We have

$$ 
\int_K
f_j^\lambda\wedge g_j^\lambda  =   
\int_K\chi_j^2 e^{\lambda(\phi_j + \psi_j)} 
\omega_1\wedge\ldots\wedge\omega_n\wedge\theta_1\wedge\ldots\theta_k
\wedge\ol\omega_1\wedge\ldots\wedge\ol\omega_n $$
$$=  
\int_K\lbrace \chi_j^2 e^{\lambda(-2\vert x\vert^2 + O(\vert x\vert^3)} 
+ O(\vert x\vert)\rbrace dz_1\wedge\ldots\wedge dz_n
\wedge d\ol z_1\wedge\ldots\wedge d\ol z_n\wedge dx_{2n+1}\wedge\ldots\wedge dx_{2n+k}.$$
  
Making the change of variables $y=\sqrt\lambda\, x$, 
and afterwards changing the name of $y$ back to $x$,  we get

\begin{align*} 
& 
\int_K
f_j^\lambda\wedge g_j^\lambda \\
& = \lambda^{-n-\frac{k}{2}} \Big\lbrace
\int_K
 \chi_j^2\left(
\frac{x} {\sqrt\lambda}\right)  & e^{-2\vert x\vert^2 
+ O(\lambda^{-\frac{1}{2}})}dz_1\wedge \ldots\wedge d\ol z_n\wedge dx_{2n+1}\wedge\ldots\wedge dx_{2n+k}\\  
 &&  + O(\lambda^{-\frac{1}{2}})\Big\rbrace.
\end{align*}

Therefore we obtain
\begin{equation} \label{firstintegral}
\left\vert 
\int_K
f_j^\lambda\wedge g_j^\lambda\right\vert 
\geq c \lambda^{-n-\frac{k}{2}}
\end{equation}
for some constant $c > 0$.\\

Also we can use (\ref{apriori4}) to get
\begin{eqnarray*}
\left\vert 
\int_K
\sum_{i,j=0}^\ell c_i^\lambda \ol c_j^\lambda u_i^\lambda\wedge g_j^\lambda \right\vert & \lesssim & 
\sum_{j=0}^\ell\vert c_j^\lambda\vert^2 \sup_{i,j} (\Vert u_i^\lambda\Vert_{K,0}\cdot\Vert g_j^\lambda\Vert_{K,0})\\
& \lesssim & \sum_{j=0}^\ell\vert c_j^\lambda\vert^2 \sup_{i,j} 
(\Vert \opa_M f_i^\lambda\Vert_{K_{p,q+1},m_{p,q+1}}\cdot\Vert g_j^\lambda\Vert_{K,0}).
\end{eqnarray*}

Now $\Vert\opa_M f_i^\lambda\Vert_{K_{p,q+1},m_{p,q+1}}$ is rapidly decreasing with respect to $\lambda$, 
whereas $\Vert g_j^\lambda\Vert_{K,0}$ is 
at most 
of polynomial growth with respect to $\lambda$, hence we get 
\begin{equation}  \nonumber
\left\vert 
\int_K
\sum_{i,j=0}^\ell c_i^\lambda \ol c_j^\lambda u_i^\lambda\wedge g_j^\lambda \,\right\vert \leq 
\sum_{j=0}^\ell\vert c_j^\lambda\vert^2 \lambda^{-n-k}
\end{equation}
for sufficiently large $\lambda$. Combining this with (\ref{firstintegral}), we get
\begin{equation} \label{integral}
\left\vert  
\int_K
f\wedge g\,\right\vert \geq \frac{c}{2}\sum_{j=0}^\ell\vert c_j^\lambda\vert^2  \lambda^{-n-\frac{k}{2}}
\end{equation}  
for sufficiently large $\lambda$.

On the other hand,
using (\ref{apriori3}), we can estimate $ 
\int_K
f\wedge g$ as follows:

$$\left\vert 
\int_K
f\wedge g\,\right\vert  \leq  C_{p,q} \Vert f\Vert_{K_{p,q},m_{p,q}} \cdot \Vert \opa_M g\Vert_{K,0}$$
$$ \lesssim  \sum_{j=0}^\ell \vert c_j^\lambda\vert^2 \sup_{i,j} (\Vert\tilde f_j^\lambda\Vert_{K_{p,q},m_{p,q}} 
\cdot \Vert \opa_M g_j^\lambda\Vert_{K,0} ) $$
$$ \lesssim  \sum_{j=0}^\ell \vert c_j^\lambda\vert^2 \sup_{i,j} (\Vert  f_j^\lambda\Vert_{K_{p,q+1},m_{p,q+1}+1} 
\cdot \Vert \opa_M g_j^\lambda\Vert_{K,0}).$$

Since $\Vert f_j^\lambda\Vert_{K_{p,q+1},m_{p,q+1}+1}$ 
has at most 
polynomial growth, 
whereas $\Vert \opa_M g_j^\lambda\Vert_{K,0}$ 
is rapidly decreasing with respect to $\lambda$, we get that
$$\left\vert 
\int_K
 f\wedge g\,\right\vert  \lesssim \sum_{j=0}^\ell \vert c_j^\lambda\vert^2 \lambda^{-n-k}.$$
This contradicts (\ref{integral}) and therefore proves that either
 $H^{p,q}(M)$  has to be infinite dimensional or $H^{p,q+1}(M)$ has to be
not Hausdorff. \\

Now, replacing $\xi$ by $-\xi$,  
and $q$ by $n\! - \! q$, 
it also follows that either $H^{p,n-q}(M)$  is infinite dimensional or  $H^{p,n-q+1}(M)$ is not Hausdorff.\\

For $q=0$, the statement was proved in \cite{BHN} and is similar 
to Boutet de Monvel's result \cite{BdM}: In this case, 
the $\tilde{M}_j$'s are 
contained 
in 
strictly pseudoconvex real  
hypersurfaces. 
If $H^{p,1}(M)$ was Hausdorff, then in particular the range of 
$\opa_M$ would be closed in $\mathcal{C}^\infty_{p,1}(M)$,  
and 
one could construct infnitely many linearly independent $CR$ functions on $M$ 
as in \cite{BHN}.

Also, the Levi-form $\mathcal{L}_{p_o}(-\xi,\cdot)$ has $n> 0$ negative and $0$ positive eigenvalues. 
By what was already proved, we therefore know that $H^{p,n}(M)$ is infinite dimensional, as 
in this case the closed range condition is trivially fulfilled   
(note that $H^{p,n+1}(M)$ is always zero).  \end{proof}

\begin{proof}[Proof of Theorem~\ref{second}]  
The proof is essentially the same as the one of Theorem \ref{first} 
and follows \cite{AFN} for the additional functional 
analysis
arguments involved. \par 
For an open neighborhood $\omega$ of $p_0$ we set 
\begin{align*}
\mathcal{Z}^{p,q}(\omega)&=\{f\in\mathcal{C}^\infty_{p,q}(\omega)\mid
\opa_M{f}=0\},\\
\mathcal{Z}^{p,q}_0(\omega)&=\{f\in\mathcal{Z}^{p,q}(\omega)\mid
[f]_{p_0}=0\},
\end{align*}
where $[f]_{p_0}$ is the local cohomology class of 
$f\in\mathcal{Z}^{p,q}(\omega)$
in $H^{p,q}(p_0)$. \par 
The map $f\to[f]_{p_0}$ is continuous. Thus,
if we assume that $H^{p,q}(p_0)$ is Hausdorff, then, for every
open neighborhood $\omega$ of $p_0$ in $M$, the subspace
$\mathcal{Z}_0^{p,q}(\omega)$  is closed and hence Fr{\'e}chet.  
By using 
Baire's category theorem 
we   
show that, for every open neighborhood 
$\omega$ of $p_0$ in $M$, we can find an open neighborhood $\omega_0$ 
 of $p_0$ in 
 $\omega$ 
 with the property  
 that, for all 
 $f\in\mathcal{Z}^{p,q}_0(\omega)$, 
 there is  
 a solution 
 $u\in\mathcal{C}^\infty_{p,q-1}(\omega_0)$ to $\,\opa_M{u}=f|_{\omega_0}$.
By   
  the open mapping theorem for Fr{\'e}chet
   spaces we also get an a priori estimate: 
   For every compact $K\Subset\omega_0$ there 
   are
   a compact 
   $K_1\Subset \omega$, an integer $m_1 \geq 0$ and a constant 
   $C_1>0$ such that  
   a 
   solution $u$ to $\,\opa_M u = f$ 
   can be chosen to satisfy
$$\Vert u\Vert_{K, 0} \leq C_1\Vert f\Vert_{K_1,m_1}.$$
This a priori estimate is analogous to \eqref{51}. 
\par

As before, we get 
a 
crucial estimate similar to (\ref{apriori3}):
\begin{equation*}
\left| 
\int_K
f\wedge{g}\,\right| \lesssim \|f\|_{K_1,m_1}\|\opa_Mg\|_{K,0},
\quad\forall f\in\mathcal{Z}^{p,q}_0(\omega),\;\forall
g\in\mathcal{D}^{n+k-p, n-q}_{K}(M).
\end{equation*}

The rest of the proof now follows the proof of Theorem \ref{first}.\end{proof}

\begin{proof}[Proof of Corollary \ref{cor}]
The statement of the Corollary follows from Theorem~\ref{first}, 
together with Malgrange's vanishing theorem for pseudoconcave $CR$ manifolds proved in \cite{BH1}: 
\textsl{Let $M$ be an abstract $CR$ manifold that is pseudoconcave and not compact. 
Then $H^{p,n}(M)=0$ for $0\leq p\leq n+k$.}
\end{proof}

\begin{proof}[Proof of Corollary \ref{third}]
The statement of the Corollary immediately follows from Theorem~\ref{second}, 
together with the validity of the Poincar\'e lemma for top-degree forms 
on pseudoconcave abstract $CR$ manifolds, which was proved in \cite{B}.
\end{proof}

\section{The case of currents}
For $U$ open in $M$ we
denote by $\Hdis^{p,q}(U)$  the cohomology groups of $\bar{\partial}_M$ 
on distribution sections. The inclusion $\Ci_{p,q}(U)\subset\Dis_{p,q}(U)$ yields
a map ${H}^{p,q}(U)\to\Hdis^{p,q}(U)$. 
Let us set 
\begin{equation}
 \mathcal{Z}^{p,q}_{w}(U)=\{f\in\mathcal{Z}^{p,q}(U)\mid f\sim{0}\;\;\text{in}\;\;\Hdis^{p,q}(U)\}.
\end{equation}
For the quotients 
\begin{equation}
 \tilde{H}^{p,q}(U)=\mathcal{Z}^{p,q}(U)/\mathcal{Z}_w^{p,q}(U)
\end{equation}
we have  natural maps 
\begin{equation}
H^{p,q}(U)\twoheadrightarrow\tilde{H}^{p,q}(U)\hookrightarrow\Hdis^{p,q}(U),
\end{equation}
the first one being onto, the second one injective, and both being continuous for the quotient topologies. 
In particular, $\tilde{H}^{p,q}(U)$ is Hausdorff when $\Hdis^{p,q}(U)$ is Hausdorff. 
We shall prove the following generalization of Theorem~\ref{first}.
\begin{thm}\label{ffirst}   
Let $M$ be an abstract 
$CR$ manifold of type $(n,k)$. 
Assume that there exists a point $p_0\in M$ and a characteristic conormal direction 
$\xi\in H^o_{p_0}M$ such that the Levi form $\mathcal{L}_{p_0}(\xi,\cdot)$ 
has $q$ negative and $n-q$ positive eigenvalues.  
Then for $0\leq p\leq n+k$, the following holds: Either 
$\tilde{H}^{p,q}(M)$ is infinite dimensional or 
$\tilde{H}^{p,q+1}(M)$ is not Hausdorff and either $\tilde{H}^{p,n-q}(M)$ is infinite dimensional or 
$\tilde{H}^{p,n-q+1}(M)$ is not Hausdorff
\end{thm}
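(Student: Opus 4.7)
The plan is to run the contradiction scheme of the proof of Theorem~\ref{first}, with smooth primitives replaced by distributional ones throughout. Argue by contradiction, assuming $\ell=\dim_{\mathbb{C}}\tilde{H}^{p,q}(M)<\infty$ and $\tilde{H}^{p,q+1}(M)$ is Hausdorff; these hypotheses guarantee that $\mathcal{Z}^{p,q}_w(M)$ and $\mathcal{Z}^{p,q+1}_w(M)$ are closed subspaces of the Fr\'echet--Schwartz spaces $\Ci_{p,q}(M)$ and $\Ci_{p,q+1}(M)$, hence Fr\'echet in their own right.

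The key analytic ingredient is the distributional analogue of \eqref{apriori3}: for every compact $K\Subset M$ there exist a compact $K_1\Subset M$, integers $m_1,m_0\geq 0$ and a constant $C>0$ such that
\begin{equation*}
 \left|\int_K f\wedge g\right|\leq C\,\|f\|_{K_1,m_1}\,\|\opa_M g\|_{K,m_0},\qquad \forall f\in\mathcal{Z}^{p,q}_w(M),\;\forall g\in\Di^{n+k-p,n-q}_K(M),
\end{equation*}
together with the corresponding estimate at bidegree $(p,q+1)$. Given $f\in\mathcal{Z}^{p,q}_w(M)$, by definition there is a distributional primitive $T\in\Dis_{p,q-1}(M)$ with $\opa_M T=f$, and Stokes' formula yields $\int_K f\wedge g=\pm\langle T,\opa_M g\rangle$ for compactly supported $g$. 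Controlling $T$ against test forms supported in $K$ proceeds by a Baire category argument: after truncating $T$ by a smooth cut-off equal to $1$ on a neighborhood of $K$, one realizes $\mathcal{Z}^{p,q}_w(M)$ as the countable union, over orders $m$ and compact sets $L\Subset M$, of the images of continuous linear maps from the Banach spaces of distributional $(p,q-1)$-forms of order at most $m$ supported in $L$ into $\mathcal{Z}^{p,q}_w(M)$. Baire's theorem produces one such image of second category in the Fr\'echet target, and the open mapping theorem between the Banach source and the image yields the required seminorm bound on $T$.

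With these estimates in hand, the geometric argument mirrors the proof of Theorem~\ref{first}. Fix $\ell+1$ distinct points $p_0,p_1,\ldots,p_\ell$ in a neighborhood $V$ of $p_0$ where the Levi-form hypothesis is valid, and construct the peak forms $f_j^\lambda,g_j^\lambda$ via the set-up of \S\ref{geosetup}. The distributional $(p,q+1)$-estimate applied to $\opa_M f_j^\lambda\in\mathcal{Z}^{p,q+1}_w(M)$ supplies primitives $u_j^\lambda\in\Dis_{p,q-1}(M)$ satisfying $\opa_M u_j^\lambda=\opa_M f_j^\lambda$, rapidly decreasing in $\lambda$ when paired with any fixed test form. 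The closed currents $\tilde f_j^\lambda=f_j^\lambda-u_j^\lambda$ agree with $f_j^\lambda$ up to these rapidly decreasing distributional errors, and the finite-dimensionality of $\tilde{H}^{p,q}(M)$ then produces, for each $\lambda$, coefficients $c_0^\lambda,\ldots,c_\ell^\lambda$, not all zero, for which $\sum c_j^\lambda\tilde f_j^\lambda$ lies in $\mathcal{Z}^{p,q}_w(M)$ up to such rapidly decreasing errors. Pairing this combination against $\sum\ol c_j^\lambda g_j^\lambda$ produces both a lower bound of the form \eqref{firstintegral} (from the leading peak form integrals, using the disjoint supports of the $\chi_j$'s) and, via the $(p,q)$ distributional estimate, a polynomially decaying upper bound analogous to \eqref{integral}; these contradict one another for $\lambda$ sufficiently large. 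The symmetric statement for $(p,n-q)$ follows by reversing the sign of $\xi$, and the degenerate cases $q=0,n$ are handled exactly as in the proof of Theorem~\ref{first}.

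The main obstacle is establishing the distributional a priori estimate: since $\Dis_{p,q-1}(M)$ is an $LF$-space rather than Fr\'echet, the direct open mapping argument underlying \eqref{51} is unavailable, and one must reduce to Banach subspaces of compactly supported distributions of bounded order via the cut-off and Baire argument above. A secondary subtlety is that the corrected currents $\tilde f_j^\lambda$ are only distributional and not smooth, so the realization of the combination $\sum c_j^\lambda\tilde f_j^\lambda$ as an element of the smooth quotient $\tilde{H}^{p,q}(M)$ must be carried out modulo rapidly decreasing distributional corrections; these, however, disappear in the asymptotic comparison with the leading lower bound coming from the peak forms.
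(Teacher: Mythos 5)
Your proposal follows essentially the same route as the paper: reduce to a distributional a priori estimate of the form \eqref{apriori7} by writing $\mathcal{Z}^{p,q}_w(M)$ as a countable union indexed by the order of the primitive, apply Baire's theorem and the open mapping theorem to a suitable graph space, and then rerun the peak-form contradiction of Theorem~\ref{first}. The only (cosmetic) difference is that you work with compactly supported distributions of bounded order after a cut-off, whereas the paper restricts primitives to a relatively compact oriented neighborhood $U$ of $K$ and uses the negative Sobolev spaces $W^{-\nu}_{p,q-1}(U)$ together with the Fr\'echet graph space $F_\nu$; both implementations yield the same estimate \eqref{65}, and the subtlety you flag about the corrected $\tilde f_j^\lambda$ being currents is handled (implicitly) the same way in the paper.
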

We use the notation \begin{equation*}
\tilde{H}^{p,q}((p_0)) = \varinjlim_{U\ni{p}_0}\tilde{H}^{p,q}(U).
\end{equation*}
for the local cohomology groups of $\, \opa_M$, on which we consider the
projective limit topology.  
\begin{thm}\label{ssecond}   
Under the same hypothesis  
of Theorem~\ref{first}
we have that, for $0\leq p\leq n+k$, the following holds: Either the local cohomology group 
$\tilde{H}^{p,q}((p_0))$ 
is infinite dimensional or 
$\tilde{H}^{p,q+1}((p_0))$ is not Hausdorff and either $\tilde{H}^{p,n-q}((p_0))$ is infinite dimensional or 
$\tilde{H}^{p,n-q+1}((p_0))$ is not Hausdorff.\par
In particular the Poincar\'e lemma for $\opa_M$ on distribution sections
fails to hold at the point $p_0$ at either bidegree $(p,q)$ or at bidegree $(p,q+1)$ and the Poincar\'e lemma for $\opa_M$  on distribution sections
fails to hold at the point $p_0$ at either bidegree $(p,n-q)$ or at bidegree $(p,n-q+1)$.
\end{thm}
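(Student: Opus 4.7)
The plan is to combine the local Baire / open-mapping argument from the proof of Theorem~\ref{second} with the replacement of smooth primitives by distributional ones used to pass from Theorem~\ref{first} to Theorem~\ref{ffirst}. For each open neighborhood $\omega$ of $p_0$ in $M$ I introduce
\begin{equation*}
\tilde{\mathcal{Z}}_w^{p,q}(\omega)=\{f\in\mathcal{Z}^{p,q}(\omega)\mid [f]_{p_0}=0\text{ in }\tilde{H}^{p,q}((p_0))\},
\end{equation*}
so that a form belongs to this kernel precisely when, on some smaller neighborhood $\omega'\subset\omega$ of $p_0$, it is $\opa_M$-exact in the sense of distributions. Arguing by contradiction, I assume $\dim\tilde{H}^{p,q}((p_0))=\ell<\infty$ and that $\tilde{H}^{p,q+1}((p_0))$ is Hausdorff; continuity of $f\mapsto[f]_{p_0}$ for the projective-limit topology then makes $\tilde{\mathcal{Z}}_w^{p,q+1}(\omega)$ closed, hence Fr\'echet--Schwartz, in $\mathcal{Z}^{p,q+1}(\omega)$.

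I then apply Baire's category theorem: for a countable fundamental system $\omega_n\Subset\omega$ of neighborhoods of $p_0$, one writes $\tilde{\mathcal{Z}}_w^{p,q+1}(\omega)$ as the ascending union over $n$ of the sets of $f$ admitting a $v\in\Dis_{p,q}(\omega_n)$ with $\opa_Mv=f|_{\omega_n}$, and exhausts $\Dis_{p,q}(\omega_n)$ by the Banach subspaces of distribution sections of fixed order and fixed compact support inside $\omega_n$. Baire produces a single $\omega_0=\omega_{n_0}$ on which solvability holds uniformly for all $f\in\tilde{\mathcal{Z}}_w^{p,q+1}(\omega)$, and the open mapping theorem yields a tame estimate of the form $\vert\langle v,\phi\rangle\vert\leq C\Vert f\Vert_{K_1,m_1}\Vert\phi\Vert_{K,m_2}$ valid for suitable compacts $K\Subset\omega_0$, $K_1\Subset\omega$ and integers $m_1,m_2\geq 0$, and for all $\phi\in\mathcal{D}^{n+k-p,n-q}_K(M)$. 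Integration by parts via Stokes' formula converts this into the distributional analogue of~\eqref{apriori3}:
\begin{equation*}
\Bigl\vert\int_Kf\wedge g\Bigr\vert\lesssim \Vert f\Vert_{K_1,m_1}\Vert\opa_Mg\Vert_{K,m_2},\qquad\forall\,f\in\tilde{\mathcal{Z}}_w^{p,q+1}(\omega),\;\forall\,g\in\mathcal{D}^{n+k-p,n-q}_K(M).
\end{equation*}

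From here the proof of Theorem~\ref{first} transcribes directly. I choose $\ell+1$ distinct points $p_0,p_1,\ldots,p_\ell$ in a common oriented neighborhood where the Levi-form hypothesis persists; attach the peak forms $f_j^\lambda,g_j^\lambda$ of Section~\ref{geosetup} to each $p_j$; and use the analogous solvability at bidegree $(p,q+1)$ to produce smooth corrections $u_j^\lambda$ with $\opa_M u_j^\lambda=\opa_M f_j^\lambda$ and $\Vert u_j^\lambda\Vert$ rapidly decreasing in $\lambda$. The forms $\tilde{f}_j^\lambda=f_j^\lambda-u_j^\lambda$ are $\opa_M$-closed, and finite dimensionality of $\tilde{H}^{p,q}((p_0))$ gives scalars $c_j^\lambda$, not all zero, such that $f=\sum_j c_j^\lambda\tilde{f}_j^\lambda\in\tilde{\mathcal{Z}}_w^{p,q}(\omega')$ for a sufficiently small $\omega'\ni p_0$. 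Pairing $f$ against $g=\sum_j\overline{c_j^\lambda}\,g_j^\lambda$ and comparing the lower bound \eqref{firstintegral} with the upper bound furnished by the distributional estimate produces the same contradiction as in the smooth case. The symmetric statement for bidegrees $(p,n-q),(p,n-q+1)$ is obtained by replacing $\xi$ with $-\xi$, and the ``in particular'' conclusion follows because the injection $\tilde{H}^{p,\bullet}((p_0))\hookrightarrow H^{p,\bullet}_{\mathrm{distr}}((p_0))$ forces vanishing of $\tilde{H}^{p,q}((p_0))$ and Hausdorffness of $\tilde{H}^{p,q+1}((p_0))$ whenever the distributional Poincar\'e lemma holds at the corresponding bidegrees.

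The principal obstacle is the Baire/open-mapping step, since $\Dis_{p,q}(\omega_0)$ is not Fr\'echet and no direct invocation of the closed range / open mapping theorem is available; one must exhaust $\Dis_{p,q}(\omega_0)$ by the Banach subspaces carried by increasing compacts with bounded distribution order, and apply Baire to this countable filtration in order to extract a tame estimate for a chosen primitive. Once this functional-analytic step is secured, everything else reduces to the peak-form identities of Section~\ref{geosetup} and the pairing estimates already used in the proofs of Theorems~\ref{first} and~\ref{ffirst}, because those computations only involve the integral pairing of the smooth peak forms against compactly supported smooth test forms.
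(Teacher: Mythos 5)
Your proposal follows essentially the same route as the paper: the paper likewise combines the local Baire/open-mapping argument of Theorem~\ref{second} with the distributional exhaustion of Theorem~\ref{ffirst}, writing $\mathcal{Z}^{p,q}_w(\omega)$ as a countable union of projections of the sets $E_\nu$ of pairs $(f,u)$ with $u$ in a negative Sobolev space $W^{-\nu}_{p,q-1}(\omega_\nu)$ over a fundamental system of neighborhoods, extracting the tame estimate, and then repeating the peak-form argument. The only blemish is an index slip in your displayed estimate, where the pairing should involve $f\in\tilde{\mathcal{Z}}_w^{p,q}$ (not $\tilde{\mathcal{Z}}_w^{p,q+1}$) against $(n+k-p,n-q)$-test forms; your subsequent description uses the two estimates at bidegrees $(p,q)$ and $(p,q+1)$ correctly.
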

\begin{proof}[Proof of Theorem~\ref{ffirst}]
The main ingredient in the proof of Theorems~\ref{ffirst},\ref{ssecond}
is to substitute the a priori estimate 
\eqref{apriori3} with an a priori estimate of the form \begin{equation}
\label{apriori7}
\left\{
\begin{aligned}
\left| \int_K f\wedge{g}\right| \leq C_{p,q}\|f\|_{K_{p,q},m_{p,q}}\cdot 
\|\bar{\partial}_Mg\|_{K,\nu_{p,q}},\qquad\qquad\qquad\\
\forall f\in\mathcal{Z}^{p,q}_w(M),\;\forall g\in\mathcal{D}_K^{n+k-p,n-q}(M).
\end{aligned}\right.
\end{equation}
Here $K, \; K_{p,q}$ are compact sets in $M$, with $K$ contained in an oriented open submanifold of $M$, 
and $m_{p,q},\nu_{p,q}$ non negative
integers. \par 
Fix a compact $K\Subset{M}$ and a relatively compact oriented open neighborhood $U$ of
$K$ in $M$. Given a Riemannian metric on $M$ we can define the Sobolev spaces
with negative exponents $W^{-\ell}_{p,q}(U)$. Since the restriction of
a distribution to a relatively compact open subset has finite order, we obtain
\begin{align*}
&\mathcal{Z}_w^{p,q}(M)={\bigcup}_{\nu=0}^\infty\piup_1(E_\nu),\;\; \text{where}
\\
& \qquad E_\nu=\{(f,u)
\in \mathcal{Z}_w^{p,q}(M)\times\Dis^{p,q-1}(M)\mid \bar{\partial}u=f,\;
u|_U\in{W}^{-\nu}_{p,q-1}(U)\},
\end{align*}
and $\piup_1$ is the projection on the first component.
Since $\mathcal{Z}_w^{p,q}(M)$ is Fr{\'e}chet, there is a $\nu=\nu_{p,q}$
for which  $\piup_1(E_\nu)$ is of the second Baire category. 
The space \begin{equation*}
F_\nu=\{(f,u)
\in \mathcal{Z}_w^{p,q}(M)\times{W}^{-\nu}_{p,q-1}(U)\mid \bar{\partial}u=f|_U\}
\end{equation*}
is a Fr{\'e}chet subspace of the product 
$\mathcal{Z}_w^{p,q}(M)\times{W}^{-\nu}_{p,q-1}(U)$
and then the fact that $\piup_1(F_\nu)$ contains a 
$\piup_1(E_\nu)$ which is of the second Baire category
implies that $\piup_1(F_\nu)=\mathcal{Z}_w^{p,q}(M)$. By the open mapping theorem,
for $\nu=\nu_{p,q}$ we can find a compact $K_{p,q}\Subset{M}$, 
an integer $m_{p,q}\geq{0}$ and a constant $C_{p,q}'>0$ such that 
\begin{equation}\label{65}
\forall f\in\mathcal{Z}^{p,q}_w(M),\;\exists u\in{W}^{-\nu_{p,q-1}}_{p,q}(U)
\;\text{s.t.}\;
\begin{cases}
\bar{\partial}_Mu=f|_U, \\
 \|u\|_{{W}^{-\nu}_{p,q-1}(U)}
\leq C_{p,q}'\|f\|_{K_{p,q},m_{p,q}}.
\end{cases}
\end{equation}
Clearly we obtain \eqref{apriori7} from \eqref{65}
and get therefore the proof  by repeating the argument in the
proof of Theorem~\ref{first}.
\end{proof}
\begin{proof}[Proof of Theorem~\ref{ssecond}] 
As before, we need to reduce to an a priori estimate of the form 
\begin{equation*}
\left|\int_K f\wedge{g}\,\right| \lesssim \|f\|_{K_1,m_1}\|\opa_Mg\|_{K,m_2},
\quad\forall f\in\mathcal{Z}^{p,q}_w(\omega),\;\forall
g\in\mathcal{D}^{n+k-p, n-q}_{K}(M),
\end{equation*}
where $\omega$ is an open oriented neighborhood of $p_0$ in $M$ 
and $K,K_1$ compact subsets of $\omega$. This can be done by
using again Baire's category argument, since \begin{align*}
&\mathcal{Z}^{p,q}_w(\omega)={\bigcup}_{\nu}\piup_1(E_\nu)\quad\text{for}\\
&\qquad E_{\nu}=\{(f,u)\in\mathcal{Z}^{p,q}_w(\omega)\times
{W}^{-\nu}_{p,q-1}(\omega_\nu)\mid \bar{\partial}_Mu=f|_{\omega_\nu}\},
\end{align*}
where $\{\omega_\nu\}$ is a fundamental system of open neighborhoods of
$p_0$ which are relatively compact in $\omega$ and $\piup_1$ is projection
on the first factor. The conclusion follows as in the proof of Theorem~\ref{second},
the only difference being to deal with the norm of the $m$-th derivatives
of $\bar{\partial}_Mg$ instead of simply the sup-norm.
\end{proof}

\section{Examples, remarks etc.}
Let $n$ be an integer $\geq{2}$. The Hermitian symmetric $n\times{n}$ matrices
form an $n^2$-dimensional  real linear space $\mathpzc{P}(n)$. Fix a basis
 $H_1,\hdots,H_{n^2}$ of $\mathpzc{P}(n)$. \par
Consider the $CR$ submanifold
$M$ of $\C^{n(n+1)}$ which is defined by 
\begin{equation}
 M=\{(z,w)\in\C^{n^2}\times\C^n\mid \im{z}_j=w^*H_jw+|z|^2,\; 1\leq{j}\leq{n}^2\}.
\end{equation} 
This $M$ is of type $(n,n^2)$. Its scalar Levi forms, corresponding to nonzero characteristics,  
are all non zero, 
and there are non degenerate scalar
Levi forms of all signatures $(q,n-q)$, for $0\leq{q}\leq{n}$. By \cite{AFN}, all local cohomology groups
$H^{p,q}((p_0)),\; \tilde{H}^{p,q}((p_0))$, 
for $p_0\in{M}$, are infinite dimensional, and the argument in the proof of Theorems~\ref{first},\ref{ffirst}
shows that also the global groups $H^{p,q}(M),\; \tilde{H}^{p,q}(M)$ are infinite dimensional, for
$0\leq{q}\leq{n}$, and all $0\leq{p}\leq{n}(n+1)$.
\par \smallskip. 

It is more difficult to produce examples of abstract non-embeddable $CR$ manifolds of higher 
$CR$-codimension, as little is known in this case (see e.g. \cite{HN0}). We sketch a possible example,
which is a variation of the example above.
Let $n$ be an integer $\geq{2}$. Traceless Hermitian symmetric $n\times{n}$ matrices
form an $n^2\! - \!1$-dimensional  real linear space $\mathpzc{P}_0(n)$. Set $k=n^2\! - \!1$
and select a basis $H_1,\hdots,H_k$ of $\mathpzc{P}_0(n)$. \par
We consider the $CR$ submanifold
$M$ of $\C^{n+k}$ which is defined by 
\begin{equation}
 M=\{(z,w)\in\C^k\times\C^n\mid \im{z}_j=w^*H_jw+|z|^2,\; 1\leq{j}\leq{k}\}.
\end{equation} 
This $M$ is of type $(n,k)$. Its scalar Levi forms corresponding to nonzero characteristics 
are all non zero and each has at least 
one positive and one negative eigenvalue, so that $M$ gives an example of a one-pseudoconcave
$CR$ manifold of  high $CR$-codimension.  On the other hand, there are non degenerate scalar
Levi forms of all signatures $(q,n-q)$, for $0<{q}<n$. 
\par 
Our $M$  is contained in an affine real quadric $S$ of $\C^{n+k}$, 
which is a $CR$ hypersurface 
with a non degenerate
Levi form of signature $(1,n+k-2)$. In \cite[\S 6.8]{HN3} it was shown that the closure $\bar{S}$ of $S$ in 
$\mathbb{CP}^{n+k}$, which is a smooth pseudoconcave compact $CR$ hypersurface, admits a
global perturbation of its $CR$ structure that is not locally $CR$-embeddable along the points of a hyperplane
section $D$. This section intersects $M$, because $M$ is pseudoconcave (cf. \cite{HN4}), 
and along its points the new
$CR$ structure agrees to the second  order with the original one. In particular, the 
scalar Levi forms of $M$, for the new $CR$-structure,  do not
change at these points. This provides an $M$ which we guess cannot be embedded into a complex
manifold and has scalar Levi forms of signatures
$(q,n-q)$ for all $0<q<n$. Our results on the global cohomology groups of $M$ apply to the local and 
global cohomology groups $H^{p,q}((p_0)),\; \tilde{H}^{p,q}((p_0))$ (for $p_0\in{M}\cap{D}$), 
$H^{p,q}(M),\;\tilde{H}^{p,q}(M)$ for all $p$ and $1\leq{q}\leq{n}-1$.

\end{document}